\documentclass{amsart}
\usepackage{amsmath}
\usepackage{amssymb}
\usepackage{amscd}
\usepackage{amsthm}
\usepackage{tikz}
\usepackage{xcolor}
\usepackage[pagebackref]{hyperref}

\newtheorem{lem}{Lemma}
\newtheorem{prop}[lem]{Proposition}
\newtheorem{thm}[lem]{Theorem}
\newtheorem{conj}{Conjecture}
\newtheorem{cor}[lem]{Corollary}
\newcommand{\Z}{{\mathbb Z}}

\newcommand{\M}{\operatorname{M}}

\newcommand{\ov}{\overline}

\newcommand{\rsp}{\raisebox{0em}[2.7ex][1.3ex]{\rule{0em}{2ex} }}
\newcommand{\lra}{\longrightarrow}

\newcommand{\coker}{{\operatorname{coker}}}

\newcommand{\Ind}{{\operatorname{Ind}}}
\newcommand{\id}{{\operatorname{id}}}
\newcommand{\im}{{\operatorname{im}}\ }

\newcommand{\Cl}{{\operatorname{Cl}}}

\newcommand{\Gal}{{\operatorname{Gal}}}
\newcommand{\Ver}{{\operatorname{Ver}}}

\newcommand{\SD}{{\operatorname{SD}}}
\newcommand{\rk}{{\operatorname{rk}}}
\newcommand{\la}{\langle}
\newcommand{\ra}{\rangle}
\newcommand{\eps}{\varepsilon}

\newcommand{\fa}{\mathfrak a}

\newcommand{\cO}{\mathcal O}

\title{Transfer kernels and the Schur multiplier}
\author{F. Lemmermeyer}
\begin{document}
\maketitle

\begin{abstract}
  In this article we prove that if $H$ is a normal subgroup of
  a finite group $G$ with $G/H$ cyclic, then the kernel of the transfer map
  $G/G' \longrightarrow H/H'$ equals $(G:H)$ times the order of the
  cokernel of the natural map $M(H) \longrightarrow M(G)$.
\end{abstract}

In this article we will prove that the capitulation kernel in
cyclic unramified extensions is bounded in terms of the Schur
multiplier of Galois groups. We include the relevant background
information, and the actual proof takes up little more than
a page. We begin with the history of the transfer map, present
an elementary introduction to the Schur multiplier, describe the
connection with the capitulation kernel in unramified abelian
extensions of number fields, and give a proof of the conjecture
in two special cases (of which the proof for groups with commutator
groups $G'$ of order $2$ is longer than the general proof), and finally
give the proof of the main result, which actually takes up less than a page.

\section{The Transfer Map}

One of Hilbert's conjectures on what we now call Hilbert class fields
concerned the capitulation of ideal classes. If $k$ is a number field
with Hilbert class field $k^1$, then the principal ideal theorem
states that every ideal in $k$ becomes principal in $k^1$. More
generally, let $K/k$ be an unramified abelian extension, let $K^1$ be
the Hilbert class field of $K$, let $G = \Gal(K^1/k)$ be its Galois
group over $k$, and let $H = \Gal(K^1/K)$.  Since
$K \subseteq k^1 \subseteq K^1$, we have $G/G' \simeq \Gal(k^1/k)$.
The map $\iota_{k \lra K}: \Cl(k) \lra \Cl(K)$ induced by sending an
ideal $\fa$ in $\cO_k$ to $\fa \cO_K$ is called the transfer of ideal
classes; its kernel $\kappa_{K/k} = \ker \iota_{k \lra K}$ consists of
classes of ideals that become principal in $K$. Artin's reciprocity
law induces a corresponding homomorphism $\Ver$ (also called transfer)
from $G/G'$ to $H$:

$$ \begin{CD}
  \Cl(k) @>{\sim}>> G/G' \\
  @V{\iota_{k \to L}}VV @VV{\Ver}V \\
  \Cl(K) @>{\sim}>> H
\end{CD} $$
This allowed Artin to reduce the principal ideal theorem to a
statement in group theory: If $G$ is a finite group, then the transfer
map $\Ver: G/G' \lra G'/G''$ is the trivial map. The name transfer
(Verlagerung) was introduced by Helmut Hasse in his report \cite{Hasse}.
The transfer map is defined as follows. Let $H$ be a subgroup of
finite index $n$ in a finite group $G$; write $G = Hg_1 \cup \cdots
\cup Hg_n$. For each $g \in G$ and each $i$ there is an $h_i \in H$
such that $g_i g = h_i g_{\sigma(i)}$ for a permutation $\sigma$ of
$\{1,\ldots,n\}$.  Then $t(g) = h_1h_2 \cdots h_n H' $ defines a
homomorphism $G \lra H/H'$ whose kernel contains $G'$. Thus $t$
induces a homomorphism
$$ \Ver: G/G' \lra H/H' $$
called the transfer map.

The transfer map was first used by Issai Schur in his article
\cite{Schur02} published in 1902. It was Schur's student Arnold
Scholz who pointed out this fact in his Jahrbuch review (JFM 60.0895.01)
of Iyanaga's article \cite{Iyanaga} on the principal ideal theorem,
which appeared in 1934 (contrary to the claims in \cite[p.~166--167]{FLR},
it was not Iyanaga who had credited Schur with inventing the transfer map). 
Here's what happened: Issai Schur wanted to publish the results of his
PhD student Roberto Frucht in his journal, the Mathematische Zeitschrift.
His co-editor Leon Lichtenstein objected to publishing a thesis in the
journal and mentioned that Schur's recent related results had appeared
in Crelle's journal. Thus Schur asked Hasse whether he would publish
Frucht's manuscript, and Hasse agreed to do that. When reading the
submission (see \cite{Frucht}), Hasse noted similarities between some
of Frucht's formulas and those appearing in Furtw\"angler's proof of
the principal ideal theorem. He asked Schur whether he could explain
these similarities, but Schur did not answer this question immediately.

When Artin visited Schur in September 1930 (Artin's parents-in-law were
living in Berlin), Artin worked on the proper definition of his $L$-series
at ramified primes, and he talked with Schur about these things. Schur
picked up Hasse's report \cite{Hasse} and discovered that Artin's
transfer map in Furtw\"angler's proof was the same map he had used in 1902.
In a letter (see \cite{SchurL}) to Hasse written on December 30, 1930, he
explained the connection in detail.

We do not know whether Scholz learned about Schur's priority from
Hasse or from his former teacher himself. Scholz used the transfer map
after giving a talk in Hamburg in 1928, where he must have
talked to Artin. Scholz later gave his student G. Hannink a topic on
the transfer for his dissertation
({\em Verlagerung und Nichteinfachheit von Gruppen}, Kiel 1935).

Only in the 1970s was it discovered that the first example of
the transfer map is Gauss's Lemma in the third proof of the
quadratic reciprocity law (see \cite{Leut} and \cite{Wat}).

\section{The Schur multiplier}

The results in this section are well known; see e.g. \cite{Karp}.
The Schur multiplier of a finite group $G$ is an important invariant
that plays a major role in the investigation of class field towers.
A group extension
$$ \begin{CD}
  0 @>>> A @>>> \Gamma @>>> G @>>> 0
\end{CD} $$
is called a central extension if $A \subseteq Z(\Gamma)$ is contained
in the center of $\Gamma$; in particular, $A$ is abelian. The group
extension is called a stem extension if $A \subseteq \Gamma'$ is
contained in the commutator group $\Gamma'$ of $\Gamma$. Schur has
shown that for central stem extensions
$$ \begin{CD}
  0 @>>> A @>>> D @>>> G @>>> 0,
\end{CD} $$
the order of $A$ is bounded, and that the groups $A$ for which the order
of $A$ is maximal are all isomorphic. In this case, $A = \M(G)$ is called
the Schur multiplier of $G$, and the group extension $D$ is called a
Schur cove of $G$. The Klein $4$-group $G = C_2 \times C_2$ has
Schur multiplier $\M(G) \simeq C_2$, and its Schur cover groups are
the dihedral group $D_8$ and the quaternion group $Q_8$ of order $8$.

In our proof below, we will use the following standard facts
on Schur multipliers (see \cite{Karp}).
\begin{itemize}
\item[(F0)] $\M(G) \cong H_2(G,\Z)$. This is a finite abelian group
  annihilated by $|G|$.
\item[(F1)] (Hopf's formula) If $G \cong F/R$ with $F$ free, then
  $\M(G) \cong (R \cap F')/[F,R]$.
\item[(F2)] If $G \cong \bigoplus_{i=1}^r C_{m_i}$ is abelian, then
  $$ \M(G) \cong G \wedge G = \bigoplus_{i<j} C_{\gcd(m_i,m_j)}. $$
  In particular, $\M(G) = 1$ if $G$ is cyclic.
\item[(F3)] If $1\to A\to\Gamma\to G\to 1$ is a central stem extension,
  then any set of lifts to $\Gamma$ of a generating set of $G$ generates
  $\Gamma$. In fact, if $Y$ is the subgroup generated by the lifts, then
  $\Gamma = YA$; as $A$ is central, $\Gamma' = Y' \subseteq Y$, hence
  $A \subseteq \Gamma' \subseteq Y$ and $\Gamma = Y$.
\end{itemize}

Next we prove a special case of the Ganea sequence (see \cite{Karp}):

\begin{lem}\label{key}
  Let $Z$ be a central subgroup of a finite group $G$. Then
  $$\coker\bigl(\M(G)\to \M(G/Z)\bigr)\;\cong\;Z\cap G'.$$
\end{lem}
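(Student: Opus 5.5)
We will compute everything with Hopf's formula (F1). Choose a free presentation $G\cong F/R$ with $F$ free of finite rank, and let $S\supseteq R$ be the preimage of $Z$ in $F$, so that $G/Z\cong F/S$. By (F1),
$$\M(G)\cong (R\cap F')/[F,R],\qquad \M(G/Z)\cong (S\cap F')/[F,S].$$
We first check that the natural map $\M(G)\to\M(G/Z)$ becomes, under these identifications, the map induced by the inclusion $R\cap F'\subseteq S\cap F'$. Since $R\subseteq S$ we have $[F,R]\subseteq[F,S]$, so this induced map is well defined. Its naturality follows because Hopf's isomorphism is functorial with respect to maps of presentations; here the identity of $F$ covers the projection $G\to G/Z$. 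Consequently
$$\coker\bigl(\M(G)\to\M(G/Z)\bigr)\cong (S\cap F')\big/\bigl((R\cap F')[F,S]\bigr).$$

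Next we map $S\cap F'$ to $G=F/R$ by the projection $\pi$. The image $\pi(S)$ is $Z$ and $\pi(F')=G'$. Since $R\subseteq S$, we get $\pi(S\cap F')=\pi(S)\cap\pi(F')$: if $sR=fR$ with $s\in S$ and $f\in F'$, then $f\in sR\subseteq S$. Hence $\pi(S\cap F')=Z\cap G'$. The kernel of $\pi$ restricted to $S\cap F'$ is $R\cap F'$, so $(S\cap F')/(R\cap F')\cong Z\cap G'$.

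It remains to see that $[F,S]\subseteq R\cap F'$, so that quotienting by $(R\cap F')[F,S]$ is the same as quotienting by $R\cap F'$. Clearly $[F,S]\subseteq F'$. Moreover, $Z$ is central in $G$, which means $[F,S]$ maps to $[G,Z]=1$, so $[F,S]\subseteq R$. Combining these facts gives
$$\coker\bigl(\M(G)\to\M(G/Z)\bigr)\cong (S\cap F')/(R\cap F')\cong Z\cap G'.$$

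The only step needing care is the first one, namely that the Hopf identifications are compatible with the natural map (the map induced by $H_2$ of the projection via (F0)). I would justify it by the standard naturality of Hopf's formula. Alternatively, if one prefers, one can take the lemma's map to be defined by Hopf's formula itself. The remaining steps are routine verifications.
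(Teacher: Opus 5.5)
Your proof is correct and is essentially the paper's argument: both use Hopf's formula to write the cokernel as $(S\cap F')/\bigl((R\cap F')[F,S]\bigr)$, use centrality of $Z$ to get $[F,S]\subseteq R$, and identify the result with $Z\cap G'$ via the projection $F\to F/R$. The only cosmetic difference is that you absorb $[F,S]$ into $R\cap F'$ before projecting, whereas the paper first computes the kernel of $S\cap F'\to S/R[S,F]$ and applies centrality afterwards; like you, the paper takes the naturality of Hopf's identification for granted.
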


\begin{proof}
  Write $G=F/R$ and let $S\supseteq R$ be the preimage of $Z$, so
  $G/Z=F/S$ and $S/R=Z$. By (F1), $\M(G/Z)=(S\cap F')/[S,F]$, and
  $\M(G)\to \M(G/Z)$ is induced by the identity of $F$ (note
  $[R,F]\subseteq[S,F]$). Its image is $(R\cap F')[S,F]/[S,F]$, hence
  $$ \coker = (S\cap F') \big/\bigl((R\cap F')[S,F]\bigr). $$
  Consider $\varphi\colon S\cap F'\to S/R[S,F]$. An element $rc$ with
  $r\in R$, $c\in[S,F]\subseteq F'$ lies in $F'$ iff $r\in F'$; so
  $\ker\varphi=(S\cap F')\cap R[S,F]=(R\cap F')[S,F]$, and
  $\coker\cong\im\varphi$. Since $Z$ is central,
  $[S,F]\subseteq R$, hence $R[S,F]=R$ and $S/R[S,F]=S/R=Z$.  
  Finally $\im\varphi=\{\,fR: f\in F'\cap S\,\}=Z\cap F'R/R=Z\cap G'$
  (if $f\in F'$ and $fR\in S/R$ then $f\in S$). $\square$
\end{proof}

\section{Theorems of Furtw\"angler and Kisilevsky}

Furtw\"angler \cite{Furtw} observed that if a number field $k$ has
$2$-class group of type $(2,2)$, then the $2$-class group of its
Hilbert $2$-class field $k^1$ is trivial or cyclic; the $2$-class
field tower of $k$ then terminates at $k^1$ in the first case and at
the second Hilbert $2$-class field $k^2$ in the second.  In addition,
the Galois group of $k^2/k$ is either the Klein $4$-group $(2,2)$, a
dihedral group, a semi-dihedral group or a quaternion
group. Kisilevsky \cite{Kisilevsky} studied this situation in detail
and showed that the Galois group $\Gal(k^2/k)$ is determined by the
capitulation in the three unramified quadratic subextensions $k_1$,
$k_2$ and $k_3$ of $k^1/k$. In the following, let $\kappa_j$ denote
the capitulation kernel in $k_j/k$ and
$C_j = N_{k_j/k} \Cl_2(k_j) \subset \Cl_2(k)$ the subgroup of
ideal classes corresponding to $k_j/k$ via Artin's reciprocity law.
The following table contains the results of Furtw\"angler and Kisilevsky;
in the last column we have added the order of the Schur multiplier of
$G$:
$$ \begin{array}{c|l|c}
  \rsp G = \Gal(k^2/k) & \text{capitulation} & |\M(G)| \\ \hline
  \rsp (2,2) & \kappa_1 = \kappa_2 = \kappa_3 = \Cl_2(k) & 2 \\
  \rsp Q_8   & \kappa_1 = C_1, \kappa_2 = C_2, \kappa_3 = C_3 & 1 \\
  \rsp D_m   & \kappa_1 = \Cl_2(k), \kappa_2 = C_3, \kappa_3 = C_2 & 2 \\
  \rsp \SD_m & \kappa_1 \cap C_1 = 1, \kappa_2 = C_3, \kappa_3 = C_2 & 1 \\
  \rsp Q_m, m \ge 2^4 & \kappa_1 = C_1, \kappa_2 = C_3, \kappa_3 = C_2 & 1 
\end{array} $$
Here $D_m$, $Q_m$ and $\SD_m$ denote the dihedral, the quaternion and
the semi-dihedral group of order $m = 2^n$.  In the case $G = \SD_m$
we have $\kappa_1 = C_2$, where $k_2$ is the quadratic subextension of
$k^1$ for which $\Gal(k^2/k_1)$ is a dihedral group.

One cannot fail to notice that if the Schur multiplier $\M(G)$ is trivial,
then $|\kappa_j| = 2$ for each quadratic unramified subextension $k_j$
of $k^1/k$. Computing capitulation kernels for other groups of
order $16$ with trivial Schur multiplier suggests the following
conjecture:

\begin{conj}
  If the Galois group $G = \Gal(k^2/k)$ of the second Hilbert class field
  of a number field $k$ has trivial Schur multiplier, then in each
  unramified quadratic subextension $k_j/k$ of $k^1/k$, exactly one
  nontrivial ideal class capitulates:  
  $$ \M(G) = 1 \quad \text{implies} \quad |\kappa_j| = 2. $$
\end{conj}

We can formulate this conjecture without any reference to number fields
by replacing the capitulation kernel with the kernel of the transfer map:
If $G$ is a finite $2$-group with trivial Schur multiplier, then
$|\ker \Ver_{G/G' \lra H/H'}| = 2$ for each subgroup $H$ of index $2$ in $G$.

Computing the kernels of the transfer maps for $2$-groups of small order
with nontrivial Schur multiplier suggests that much more is true:
\begin{conj}\label{Con2}
  For every finite $2$-group and each subgroup $H$ of index $2$ in $G$ we have
\begin{equation}\label{C1}
  |\ker \Ver_{G/G' \lra H/H'}|  \le 2 \cdot |\M(G)|.
\end{equation}
\end{conj}
This conjecture goes back to \cite[(7.20)]{LemT}; there I have proved
that if $k$ is a number field whose $p$-class field tower terminates
with $k^\infty$ and where $G = \Gal(k^\infty/k)$, then for every
cyclic unramified $p$-extension $K/k$ we have
$$ (K:k) \le |\kappa_{K/k}| \le (K:k) \cdot |\M(G)|. $$
Jack Schmidt and Derek Holt have verified this inequality for all
$2$-groups of order $\le 256$.

The natural generalization of our conjecture to arbitrary $p$-groups
is the following result, which is a special case of Theorem~\ref{thmp}.

\begin{thm}\label{TM}
  For each finite $p$-group $G$ and each subgroup $H$ with index $p$ we have
  \begin{equation}\label{C2}
    |\ker \Ver_{G/G' \lra H/H'}|  \le p \cdot |\M(G)|.
  \end{equation}
\end{thm}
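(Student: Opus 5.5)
The plan is to prove the sharper statement announced in the abstract: if $H\trianglelefteq G$ with $Q=G/H=\langle \sigma H\rangle$ cyclic of order $n$, then
$$|\ker \Ver_{G/G'\to H/H'}| \;=\; n\cdot\bigl|\coker(\M(H)\to\M(G))\bigr|.$$
Theorem~\ref{TM} then follows by taking $n=p$ and noting that the cokernel is a quotient of $\M(G)$. Write $M=H/H'$, a finite $Q$-module via conjugation by $\sigma$, and let $N=1+\sigma+\cdots+\sigma^{n-1}$ be the norm.

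\emph{Step 1: explicit transfer.} With coset representatives $1,\sigma,\dots,\sigma^{n-1}$, a direct computation from the definition gives
$$\Ver(h)=N(h)H' \quad (h\in H), \qquad \Ver(\sigma)=\sigma^nH'.$$
Since $G/H$ is abelian we have $G'\subseteq H$. Moreover $G'$ is generated by $H'$ together with the commutators $[\sigma,h]$, so $G'=H'\cdot I_QM$ and $H/G'=M_Q$. This gives an exact sequence
$$0\to \ker\Ver\cap (H/G')\to \ker \Ver\to G/H.$$

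\emph{Step 2: the two pieces.} The first term is the kernel of $N$ on $M_Q$, which is $\hat H^{-1}(Q,M)$. Since $M$ is finite and $Q$ is cyclic, the Herbrand quotient gives $|\hat H^{-1}(Q,M)|=|\hat H^0(Q,M)|$, where $\hat H^0(Q,M)=M^Q/NM$. For the second term, the class $\sigma^kH$ lies in the image of $\ker\Ver$ exactly when $\sigma^{kn}H'\in N(M)$. Now $\sigma^n$ lies in $H$ and commutes with $\sigma$, so $\sigma^nH'\in M^Q$. If $c$ denotes the class of $\sigma^nH'$ in $\hat H^0(Q,M)$, the image of $\ker\Ver$ in $G/H$ is therefore the subgroup of index $\operatorname{ord}(c)$. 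Hence
$$|\ker\Ver| = n\cdot |\hat H^0(Q,M)|/\operatorname{ord}(c).$$

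\emph{Step 3: identify the cokernel.} This is the main step. I would use the Lyndon--Hochschild--Serre spectral sequence $H_a(Q,H_b(H))\Rightarrow H_{a+b}(G)$ together with (F0). We have $H_2(Q)=0$ by (F2), and $H_3(Q)\cong\Z/n$. The filtration of $H_2(G)$ has $E^\infty_{0,2}=$ the image of $\M(H)$ and $E^\infty_{2,0}=0$. The only differential reaching $E^2_{1,1}=H_1(Q,M)$ is $d_2\colon H_3(Q)\to H_1(Q,M)$, and the differentials leaving $E^2_{1,1}$ vanish. Therefore
$$\coker(\M(H)\to\M(G))\cong H_1(Q,M)/d_2(H_3(Q)).$$
For $Q$ cyclic, $H_1(Q,M)=\hat H^{-2}(Q,M)\cong \hat H^0(Q,M)$ via cup product with a generator of $\hat H^{-2}(Q,\Z)$. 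The obstacle is to show that under this identification $d_2$ sends a generator of $H_3(Q)$ to $\pm c$. The reason is that $d_2$ is cup product with the extension class of $1\to M\to G/H'\to Q\to 1$ in $H^2(Q,M)=\hat H^0(Q,M)$, and this class is represented by $\sigma^nH'$.

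If the spectral-sequence bookkeeping becomes awkward, I would instead verify this identification by hand via Hopf's formula (F1). Present $G=F/R$ with a generator $s$ mapping to $\sigma$, and compute the cokernel $(R\cap F')/[F,R]$ modulo the image of $\M(H)$, in the same spirit as the proof of Lemma~\ref{key}. In either case this yields $|\coker|=|\hat H^0(Q,M)|/\operatorname{ord}(c)$, and combining with Step 2 gives the equality and hence (\ref{C2}).
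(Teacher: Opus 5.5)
Your proposal is correct, but its decisive step takes a different route from the paper.

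\textbf{Where the two proofs agree.} Steps~1--2 are essentially the paper's counting argument in a refined form. Your description $G'=H'\cdot I_QM$ is Proposition~\ref{countp}; your one-line justification needs that proposition's normality/abelian-quotient argument. Moreover, Step~1 gives $\Ver(G/G')=NM+\la\sigma^nH'\ra$. Hence your target group $\hat H^0(Q,M)/\la c\ra=M^Q/(NM+\la\sigma^nH'\ra)$ is exactly the paper's $A^\tau/\Ver(G/G')$.

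\textbf{Where they diverge: Step~3.} The paper never uses a spectral sequence. Via Shapiro's lemma for $\Z[G/H]$, it takes homology of two short exact sequences:
\begin{itemize}
\item from $0\to I\to\Z[C]\xrightarrow{\varepsilon}\Z\to0$ it gets $\coker(\M(H)\to\M(G))\cong\ker\bigl(H_1(G,I)\to H/H'\bigr)$;
\item from $0\to\Z\xrightarrow{\nu}\Z[C]\xrightarrow{\tau-1}I\to0$ it gets $H_1(G,I)\cong(H/H')/\Ver(G/G')$, since $\nu_*$ is the transfer on $H_1$ and is multiplication by $(G:H)$, hence injective, on $H_0$.
\end{itemize}
The composite $j\pi=\tau-1$ then cuts out $A^\tau$.

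You instead put all the weight on the claim that $d_2\colon H_3(Q,\Z)\to H_1(Q,M)$ in the Lyndon--Hochschild--Serre spectral sequence is, up to sign, cap product with the class of $1\to M\to G/H'\to Q\to1$. That claim is true, but it is a genuinely nontrivial theorem, not a consequence of the bookkeeping. It is the homological form of Hochschild--Serre's description of $d_2\colon E_2^{p,1}\to E_2^{p+2,0}$ as cup product with the extension class, obtained by dualizing with $\mathbb{Q}/\Z$ coefficients. You must cite it explicitly; your Hopf-formula fallback is only announced, not carried out. Signs are harmless here, since you only need $\operatorname{ord} d_2(\gamma)=\operatorname{ord}(c)$ for a generator $\gamma$ of $H_3(Q,\Z)$.

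\textbf{What each route buys.} Yours gives insight: it shows that the cokernel is governed by the extension class $\sigma^nH'$. It also gives a finer decomposition of $\ker\Ver$ into $\hat H^{-1}(Q,M)$ and a subgroup of index $\operatorname{ord}(c)$ in $G/H$. The paper's route is elementary and self-contained, using only long exact sequences and the standard compatibilities of Shapiro's isomorphism with corestriction, transfer and conjugation.

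\textbf{One small omission.} To deduce Theorem~\ref{TM} from your general statement, say explicitly that a subgroup of index $p$ in a $p$-group is normal with cyclic quotient.
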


The more general conjecture that 
$$ |\ker \Ver_{G/G' \lra H/H'}|  \le (G:H) \cdot |\M(G)| $$
for non-cyclic subgroups is not true: the groups $[128, 23]$ and
$[128, 31]$ have Schur multiplier of order $2$ and possess two
subgroups of index $4$ in which the kernel of the transfer map has
order $16$.

\section{Simple Cases}

Theorem \ref{TM} is easy to prove when the group $G$ is abelian:

\begin{prop}
  If $G$ is an abelian $2$-group, then $|\ker \Ver|\le 2\,|\M(G)|$.  
\end{prop}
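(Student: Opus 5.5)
For abelian $G$ the transfer has a very explicit form: if $H$ has index $n$ in an abelian group $G$, then $\Ver(g) = g^n$. So here $\Ver\colon G \to H$ is the squaring map $g\mapsto g^2$, and its kernel is the subgroup $G[2]$ of elements of order dividing $2$. The plan is to compare $|G[2]|$ with $|\M(G)|$ using the explicit formula (F2).

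First I verify the formula for the transfer. With coset representatives $g_1,\dots,g_n$ of $H$ in $G$, the relation $g_i g = h_i g_{\sigma(i)}$ together with commutativity gives
$$ \prod_i h_i = \prod_i g_i g\, g_{\sigma(i)}^{-1} = g^n. $$
Since $H$ is abelian we have $H'=1$, so $\Ver(g)=g^2$ when $(G:H)=2$. Hence $\ker\Ver = G[2]$. Writing $G\cong\bigoplus_{i=1}^r C_{2^{a_i}}$ with all $a_i\ge 1$, this gives $|\ker\Ver| = 2^r$.

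Next I bound $|\M(G)|$ from below using (F2):
$$ |\M(G)| = \prod_{i<j}\gcd(2^{a_i},2^{a_j}) \ge 2^{\binom r2}, $$
because each gcd is at least $2$. It then suffices to check that $2^r\le 2\cdot 2^{r(r-1)/2}$, that is, $r-1\le r(r-1)/2$. This holds for $r=1$ (equality) and for every $r\ge2$, since then $r/2\ge1$.

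The case $r=0$ cannot occur, because a subgroup of index $2$ forces $G\ne1$. For $r=1$ the group $G$ is cyclic, $\M(G)=1$, and the kernel has order exactly $2$, so the bound is sharp. There is no real obstacle here. The only point needing care is the identification $\Ver(g)=g^{(G:H)}$ in the abelian case, which is the short computation above. The same argument works verbatim for odd $p$ and index $p$: it gives $p^r\le p\cdot p^{\binom r2}$.
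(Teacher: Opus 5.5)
Your proposal is correct and follows essentially the same route as the paper: you identify $\Ver$ with the squaring map, so $\ker\Ver = G[2]$ has order $2^r$, then use (F2) to get $|\M(G)|\ge 2^{r(r-1)/2}$ and compare exponents. Your general computation $\prod_i h_i = g^{(G:H)}$ is a slightly cleaner justification of the squaring formula than the paper's case split on $g\in H$ versus $g\notin H$.
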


\begin{proof}  
  Write $G = H \cup tH$. Since $G$ is abelian, $g^t=g$. For $g \in H$
  the transfer is $\Ver(g)=g\,g^t=g^2$, and for $g \notin H$ it is
  $\Ver(g)=g^2$.  So $\Ver$ is the squaring map and $\ker \Ver = G[2]$.
  Write $G \cong \bigoplus_{i=1}^rC_{2^{a_i}}$; then $|\ker \Ver| = 2^r$.
  By (F2), $\M(G) \cong \bigoplus_{i<j} C_{2^{\min(a_i,a_j)}}$,
  hence $|\M(G)| \ge 2^{r(r-1)/2}$. But
  $$ 2^r \le 2 \cdot 2^{r(r-1)/2} \iff \frac{(r-1)(r-2)}{2} \ge 0, $$
  which is true for all $r\ge1$. Equality occurs exactly for $r=1$, or
  $r=2$ with $\min(a_1,a_2)=1$.
\end{proof}

A similar proof for groups with $|G'| = 2$ requires considerably more work;
we include it here because it involves classifying all $2$-groups $G$ with 
$|G'| = 2$ and trivial Schur multiplier.

\begin{prop}
  If $G$ is a $2$-group with $G'$ of order $2$, then
  $|\ker \Ver|\le 2\,|\M(G)|$.  
\end{prop}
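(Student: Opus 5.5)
We work directly with the transfer into an index-$2$ subgroup and compare its kernel with $\M(G)$. Throughout, $G$ is a $2$-group with $G' = \langle c\rangle$ of order $2$, and $H$ has index $2$. Since $G'$ is normal of order $2$, $c$ is central. Commutators satisfy $[x,y]^2=[x^2,y]=1$, so $G^2 \subseteq Z(G)$ and the commutator pairing $\bar G\times \bar G\to G'$, with $\bar G = G/Z(G)$, is a nondegenerate alternating form over $\mathbb F_2$. Write $G=H\cup tH$. For $g\in H$ we have $\Ver(g)=g\,g^t H' = g^2[g,t]H'$. For $g\notin H$ we have $\Ver(g)=g^2H'$. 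Here $H'$ is either $1$ or $G'$.

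The plan is to bound $|\ker\Ver|$ from above and $|\M(G)|$ from below, and compare the two.

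\textbf{Upper bound for $|\ker\Ver|$.} The map $\Ver$ factors through $G\to G/G'$. Up to the twist by $[g,t]$ on $H$, it is essentially the squaring map. Let $r = d(G)$ be the rank of $G/\Phi(G)$. I expect
$$|\ker\Ver|\le 2^{r}\cdot |G'| = 2^{r+1},$$
and in the case $H'=G'$ even $\le 2^r$. To prove this, note that $\Ver(g)$ depends only on $g$ modulo $G^2G'=\Phi(G)$ up to squares. Then count the elements of $G/G'$ whose image lies in $H'$ in a quotient of rank at most $r$. Concretely, the composition of $\Ver$ with $H/H'\to H/H'G^2$ is linear on $G/\Phi(G)$, which gives the factor $2^r$.

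\textbf{Lower bound for $|\M(G)|$.} Apply Lemma~\ref{key} with a central subgroup $Z$. Choose $Z$ to be central, of order $2$, and \emph{not} contained in $G'$ (possible unless $Z(G)$ is cyclic). Then $Z\cap G'=1$, so $\M(G)\to\M(G/Z)$ is surjective and $|\M(G)|\ge|\M(G/Z)|$. Iterating, we reduce to a quotient with cyclic center containing $G'$; that is, $G/N$ is an extraspecial-type group $2^{1+2m}$, possibly a central product with $C_4$. Alternatively, use $Z=G'$: Lemma~\ref{key} gives $|\M(G/G')|\le 2|\M(G)|$ after using the image. By (F2), $\M(G/G')$ has order at least $2^{r(r-1)/2}$. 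Hence
$$|\M(G)|\ge 2^{r(r-1)/2-1}.$$

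\textbf{Comparison and the small cases.} Combining the two bounds gives the claim whenever $2^{r+1}\le 2\cdot 2^{r(r-1)/2-1}$. This holds for $r\ge 4$ and for most configurations with $r=3$.

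The main obstacle is the remaining small-rank cases. These are $r=2$, together with those $r=3$ groups where the bounds are not sharp enough. Here $\M(G)$ may well be trivial, and one must show that $|\ker\Ver|=2$ exactly. My first approach is to classify the $2$-groups with $|G'|=2$ and $r\le 3$ as central products of $D_8$ or $Q_8$ with cyclic or abelian groups, and of the modular groups $M_{2^n}$. I would then compute $\M(G)$ via Hopf's formula (F1) or via Lemma~\ref{key} applied to $Z=G'$. Next I would compute $\ker\Ver$ directly from the formulas above for the three subgroups of index $2$. For $r=2$ the groups are metacyclic, and both sides can be checked explicitly.
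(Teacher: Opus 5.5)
Your overall strategy matches the paper's, and so does your lower bound $|\M(G)|\ge 2^{r(r-1)/2-1}$ (Lemma~\ref{key} with $Z=G'$ together with (F2)). The gap is in the upper bound.

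\textbf{The upper bound.} The argument you sketch gives nothing. The composite of $\Ver$ with $H/H'\to H/H'G^2$ is identically zero, because $\Ver(g)\equiv g^2\pmod{G'}$ and $G'\subseteq G^2=\Phi(G)$ (note $[a,b]=a^{-2}(ab^{-1})^2b^2$). So that composite carries no information about $\ker\Ver$. The target $2^{r+1}$ is also too weak: at $r=3$ your comparison would need $2^4\le 2^3$, and ``most configurations'' is not an argument. The missing idea is to reduce modulo $G'$, not modulo $\Phi(G)$. Since $g^t=g[g,t]\equiv g\pmod{G'}$, the composite of $\Ver$ with $H/H'\to H/G'\subseteq G/G'$ is exactly the squaring map of $G/G'$. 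Hence $\ker\Ver\subseteq (G/G')[2]$ and $|\ker\Ver|\le 2^r$, whether $H'=1$ or $H'=G'$. With this bound every $r\ge 3$ follows at once from $2^r\le 2^{r(r-1)/2}\le 2|\M(G)|$.

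\textbf{The case $r=2$.} This case is not actually proved in your proposal. You defer it to a classification that you do not carry out, and the structural claims behind that plan are false:
\begin{itemize}
\item The group $\langle x,y\mid x^4=y^4=1,\ z=[x,y]\text{ central of order }2\rangle$ has order $32$, exponent $4$ and $|G'|=2$. It is neither metacyclic nor a central product of $D_8$, $Q_8$ or $M_{2^n}$ with an abelian group.
\item $M_{2^n}$ has cyclic center, yet it is not of the form $2^{1+2m}$ or $2^{1+2m}*C_4$, so your first reduction would already get stuck there.
\end{itemize}
No such classification is needed. Write $G/G'\cong C_{2^{a_1}}\times C_{2^{a_2}}$ with $b=\min a_i$. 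Then (F2) gives $|\M(G/G')|=2^b$, and Lemma~\ref{key} gives $|\M(G)|\ge 2^{b-1}$. Combined with $|\ker\Ver|\le 4$, this settles $b\ge 2$, and it settles $b=1$ whenever $\M(G)\ne 1$. The groups that remain are exactly the $P$-groups. Theorem~\ref{main} identifies them as $Q_8$ and $M_{2^n}$, and for these $|\ker\Ver|=2$ is checked directly.
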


\begin{proof}
  Assume that $G' = \la  z \ra $ (where $G'$ is central and $G' \le H$,
  so $H'\le G'$),
  $\ov{G} = G/G' \cong \bigoplus C_{2^{a_i}}$ of rank $r$, and $b=\min a_i$.
  \begin{enumerate}
  \item \textbf{$\ker \Ver \subset \ov{G}[2]$.} For $g\in H$, $g^t=g[g,t]$
    with $[g,t]\in G'$. Modulo $G'$ the transfer therefore becomes
    squaring $\ov{G}\to H/G'\subset\ov{G}$, so $\ker \Ver \subset \ov{G}[2]$
    and $|\ker \Ver|\le2^r$.
  \item \textbf{$|\M(G)| \ge \big|\ov{G} \wedge \ov{G}\big|/2$.}
    Apply Lemma~\ref{key} to $Z = G'$; then
    $\coker(M(G) \lra M(G/G')) = G'$ has order $2$, hence
    $|M(G) \ge \frac12|M(G/G') = 2^{r(r-1)/2-1}$, and for $r=2$,
    $|\M(G)| \ge 2^{b-1}$.    
\item \textbf{$r\ge3$.} Then $2^r\le2^{r(r-1)/2}\le2|\M(G)|$, so the
  conjecture holds.
\item \textbf{$r=2$, $b\ge2$.} Then $|\M(G)|\ge2$ and
  $|\ker \Ver | \le 4 \le2 |\M(G)|$.
\item \textbf{$r=2$, $b=1$} ($\ov{G}\cong C_{2^a}\times C_2$). The
  bound $4$ suffices if $|\M(G)| \ge 2$. The remaining case is $\M(G)=1$,
  where $\ker \Ver$ must be smaller than $\ov{G}[2]$.
\end{enumerate}
The groups in the remaining case are, as we will show below, the
quaternion group if $G$ has order $8$ and the modular groups of order
$n \ge 16$. For these groups, it is well known that the result is
true.
\end{proof}

Call a finite $2$-group $G$ a \emph{$P$-group} if $|G'|=2$ and
$\M(G)=1$. Let
$$ M_{2^n}=\la  x,y\mid x^{2^{n-1}}=y^2=1,\
   y^{-1}xy=x^{1+2^{n-2}}\ra , \quad n\ge 4. $$

\begin{lem}\label{cor}
  Let $G$ be a $P$-group.
  \begin{enumerate}
  \item[(a)] $\M(G/G')\cong G'\cong C_2$.
  \item[(b)] If $Z\le Z(G)$ has order $2$ and $Z\ne G'$, then $G/Z$ is
    again a $P$-group.
  \end{enumerate}
\end{lem}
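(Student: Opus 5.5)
Both parts follow from Lemma~\ref{key}, applied to a suitable central subgroup, together with the hypothesis $\M(G)=1$ and the fact $|G'|=2$. Recall that a subgroup of order $2$ in a $2$-group is normal, hence central. In particular $G'$ is central.

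\textbf{(a).} Apply Lemma~\ref{key} with $Z=G'$. Since $\M(G)=1$, the cokernel of $\M(G)\to\M(G/G')$ is all of $\M(G/G')$. The lemma identifies this cokernel with $G'\cap G'=G'$. Hence $\M(G/G')\cong G'\cong C_2$, the latter because $|G'|=2$.

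\textbf{(b).} Let $Z\le Z(G)$ have order $2$ with $Z\ne G'$. Since $G'$ and $Z$ are distinct subgroups of order $2$, we get $Z\cap G'=1$.

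\emph{Trivial multiplier.} Lemma~\ref{key} gives
$$\coker\bigl(\M(G)\to\M(G/Z)\bigr)\cong Z\cap G'=1.$$
So $\M(G)\to\M(G/Z)$ is surjective. As $\M(G)=1$, this forces $\M(G/Z)=1$.

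\emph{Commutator subgroup.} We have $(G/Z)'=G'Z/Z\cong G'/(G'\cap Z)=G'$, which has order $2$.

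Thus $G/Z$ is again a $P$-group. The only point needing care is that $(G/Z)'$ does not collapse, and this is exactly guaranteed by $Z\cap G'=1$. No step presents a real obstacle: everything reduces to Lemma~\ref{key} and the computation of $(G/Z)'$.
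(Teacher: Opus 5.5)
Your proof is correct and follows the paper's argument exactly: Lemma~\ref{key} with $Z=G'$ gives (a), the same lemma with the given $Z$ (using $Z\cap G'=1$) gives (b), and you also use $(G/Z)'=G'Z/Z\cong G'$. One slip in your opening remark: a subgroup of order $2$ in a $2$-group need not be normal (e.g.\ $\langle s\rangle$ for a reflection $s\in D_8$); what you actually need is still true, namely that $G'$ is normal and every normal subgroup of order $2$ is central.
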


\begin{proof}
  (a) $G'$ is central; apply Lemma~\ref{key} with $Z=G'$: $\M(G)=1$
  gives $\M(G/G')\cong G'$.

  (b) $Z\cap G'=1$, so by Lemma~\ref{key} $\M(G)\to \M(G/Z)$ is onto;
  thus $\M(G/Z)=1$. Also $(G/Z)'=G'Z/Z\cong G'$ has order
  $2$. $\square$
\end{proof}

\begin{thm}\label{main}
  The $P$-groups are exactly $Q_8$ and the modular groups $M_{2^n}$, $n\ge4$.
\end{thm}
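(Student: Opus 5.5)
The plan is to pin down $G/G'$ first, then the relations among two lifted generators, and finally use the condition $\M(G)=1$ to discard every candidate except $Q_8$ and the $M_{2^n}$. By Lemma~\ref{cor}(a) we have $\M(G/G')\cong C_2$. Write $G/G'\cong\bigoplus_{i=1}^r C_{2^{a_i}}$. Then (F2) says $\bigoplus_{i<j}C_{2^{\min(a_i,a_j)}}$ has order $2$, which forces $r=2$ and $\min(a_1,a_2)=1$. Hence $G/G'\cong C_{2^a}\times C_2$ with $a\ge1$, and $|G|=2^{a+2}$.

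Next I fix the presentation. Choose $x,y\in G$ mapping to generators of orders $2^a$ and $2$ in $G/G'$. Since $G'\subseteq\Phi(G)$, the elements $x,y$ generate $G$, so $G'=\la z\ra$ with $z=[x,y]$ central of order $2$. This gives $x^{2^a}\in\{1,z\}$, $y^2\in\{1,z\}$, and $y^{-1}xy=xz$. If $a=1$, then $G$ is a nonabelian group of order $8$, so $G\in\{D_8,Q_8\}$. Since $\M(D_8)\cong C_2$ (this can be read off from the table above), only $Q_8$ survives.

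Now let $a\ge2$. If $x^{2^a}=z$, then $x$ has order $2^{a+1}$ and $z=x^{2^{a-1}\cdot 2}$, so $y^{-1}xy=x^{1+2^{a}}$. If also $y^2=z$, replace $y$ by $y'=yx^{-2^{a-1}}$. Because $a\ge2$, $x^{2^{a-1}}$ commutes with $y$, so $y'^2=y^2x^{-2^a}=1$. Thus $G\cong M_{2^{a+2}}$. In the remaining case $x^{2^a}=1$, the element $w=x^{2^{a-1}}$ is central, since $[w,y]=z^{2^{a-1}}=1$. Also $w\notin G'$, because $w\ne1$ has nontrivial image in $G/G'$. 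Here I want to show $\M(G)\ne1$. There are two routes.
\begin{enumerate}
\item[(i)] Construct an explicit central stem extension of order $2|G|$: take $\tilde G=\la \tilde x,\tilde y\rangle$ with $\tilde x$ of order $2^{a+1}$, $\tilde z=[\tilde x,\tilde y]$ central of order $2$, and the new central element $u=\tilde x^{2^a}$. One must check that $u\in\tilde G'$ (this can fail, so the construction must be chosen carefully, e.g.\ making $\tilde z$ of order $4$ with $\tilde z^2=u$).
\item[(ii)] Argue by induction using Lemma~\ref{cor}(b): $G/\la w\ra$ is again a $P$-group, hence $Q_8$ or modular by induction. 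Then compare orders of elements: every $P$-group of order $\ge 8$ has an element of index $2$ whose square class generates $G'$, whereas in $G$ all elements of order $2^a$ satisfy $g^{2^a}=1$.
\end{enumerate}
I will try (i) first, writing the candidate cover $\tilde G$ as a group of order $2^{a+3}$ in which $\la\tilde x,\tilde y\ra$ has commutator $\tilde z$ of order $4$.

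Conversely, $\M(Q_8)=1$, and $\M(M_{2^n})=1$ holds because these are metacyclic split extensions for which the standard formula in \cite{Karp} gives the trivial multiplier; alternatively one checks via (F3) that any central stem extension of $M_{2^n}$ is generated by two lifts satisfying relations that force $A=1$. Both groups visibly have $|G'|=2$, so they are $P$-groups.

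I expect the main obstacle to be the elimination of the case $x^{2^a}=1$, $a\ge2$, i.e.\ showing that the groups $\la x,y\ra$ with $x^{2^a}=1$ (essentially $C_{2^{a-1}}\times D_8$-like central products and their variants with $y^2=z$) have nontrivial Schur multiplier. The reduction to the presentation is routine bookkeeping; that case needs either a carefully chosen central stem extension or a clean Hopf-formula computation, as in Lemma~\ref{key}.
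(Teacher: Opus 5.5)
Your reduction to $G/G'\cong C_{2^a}\times C_2$, the presentation, the case $a=1$, and the case $x^{2^a}=z$ are fine. They agree with Steps 1, 2 and 4 of the paper, and your substitution $y'=yx^{-2^{a-1}}$ is a slightly cleaner version of the paper's.

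The gap is the case you flag yourself: you never actually eliminate $x^{2^a}=1$ for $a\ge 2$.
\begin{itemize}
\item \textbf{Route (i)} is not carried out, and in the form sketched it cannot work. Suppose $1\to A\to\tilde G\to G\to1$ is a central stem extension and $\tilde z=[\tilde x,\tilde y]$ is central. Then $\tilde y^2$ lies in the central subgroup $\la\tilde z\ra A$, so $\tilde z^2=[\tilde x,\tilde y^2]=1$. Since also $A\subseteq\tilde G'=\la\tilde z\ra$ and $\tilde z\notin A$, this forces $A=1$.
\item \textbf{Route (ii)} starts with the right idea, which is the paper's: $w$ is central, $w\notin G'$, so $G/\la w\ra$ is a $P$-group by Lemma~\ref{cor}(b). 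But the concluding comparison proves nothing as written. The sentence ``in $G$ all elements of order $2^a$ satisfy $g^{2^a}=1$'' is a tautology. It also concerns $G$, not the quotient to which the induction hypothesis applies. And the \emph{existence} of an element with some property in $Q_8$ or $M_{2^{a+1}}$ cannot contradict the existence of a different element lacking it.
\end{itemize}

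The missing observation is that the hypothesis $x^{2^a}=1$ passes to the quotient. Put $\bar G=G/\la w\ra$.
\begin{itemize}
\item The images $\bar x,\bar y$ generate $\bar G$.
\item $\bar G/\bar G'\cong G/G'\la w\ra\cong C_{2^{a-1}}\times C_2$, with $\bar x$ of order $2^{a-1}$ there.
\item $\bar x^{2^{a-1}}=\bar w=1$.
\end{itemize}
So $\bar G$ is a $P$-group of the Step-2 type with parameter $a-1$ and again $\alpha=0$. This is why the paper inducts on the statement ``no $P$-group has $\alpha=0$''. That formulation needs no information about the isomorphism type of the quotient. It bottoms out at $a=1$, where an involution outside $G'$ forces $G\cong D_8$, which is excluded by $|\M(D_8)|=2$.

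If you prefer to keep your strong induction on the full classification, you must prove a universal statement: in $Q_8$ and in $M_{2^{a+1}}$, \emph{every} $g$ whose image in the abelianization has order $2^{a-1}$ satisfies $g^{2^{a-1}}\ne1$. In $Q_8$ every element outside $G'$ has order $4$. In $M_{2^n}$ one checks $(x^iy^j)^{2^{n-2}}=x^{i2^{n-2}}\ne1$ for odd $i$. That also closes the gap, at the cost of an extra computation.
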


\paragraph{Step 1: $G/G'\cong C_{2^a}\times C_2$.}
By Lemma~\ref{cor}(a), $|\M(\ov{G})|=2$ for
$\ov{G} = G/G' = \bigoplus_iC_{2^{a_i}}$ ($a_1\ge\dots\ge a_r\ge1$). By (F2),
$\sum_{i<j}\min(a_i,a_j)=1$. This forces $r=2$, $a_2=1$ (for $r\le1$
the sum is $0$; for $r\ge3$ it is $\ge3$). Put $a=a_1$; then
$|G|=2^{a+2}$.

\paragraph{Step 2: generators and relations.}
Choose $x,y\in G$ whose images generate $\ov{G}$ with orders $2^a$ and
$2$. By (F3) (applied to the central stem extension
$1 \to G' \to G \to \ov{G} \to 1$),  $G = \la  x,y \ra $.
The subgroup $G'$ is the normal
closure of $z:=[x,y]$; being central of order $2$, $G'=\la 
z\ra $, $z\ne1$, $z$ central. Also $x^{2^a},y^2\in G'$:
$$x^{2^a}=z^\alpha,\qquad
y^2=z^\beta,\qquad\alpha,\beta\in\{0,1\},\qquad y^{-1}xy=x[x,y]=xz.$$
Note that $x$ has order $2^a$ and $\la  x\ra \cap G'=1$ if
$\alpha=0$ ($x^k\in G'$ forces $2^a\mid k$).

\paragraph{Step 3: $\alpha=0$ is impossible.} We prove by induction on $a\ge1$: \emph{no $P$-group as in Step~2 has $\alpha=0$.}
\begin{itemize}
\item $a=1$: then $x^2=1$ and $x\notin G'$, $|G|=8$, $|G'|=2$. $Q_8$
  has a unique involution, which lies in $G'$; so $G\cong D_8$. But
  $|\M(D_8)|=2$, so $D_8$ is no $P$-group.
\item $a\ge2$: $x^2$ is central ($[x^2,y]=[x,y]^x[x,y]=z^2=1$), hence
  $Z=\la  x^{2^{a-1}}\ra \le\la  x^2\ra $ is central of
  order $2$, and $Z\cap G'\subseteq\la  x\ra \cap G'=1$. By
  Lemma~\ref{cor}(b), $G/Z$ is a $P$-group. In $G/Z$ the images of
  $x,y$ generate, $\bar x^{2^{a-1}}=1$, $(G/Z)'=\la  z\ra $, and
  the abelianization of $G/Z$ is $G/G'Z\cong C_{2^{a-1}}\times C_2$
  with $x,y$ of orders $2^{a-1},2$. So $G/Z$ is a $P$-group of the
  type of Step~2 with parameter $a-1$ and $\alpha=0$, contradicting
  the induction hypothesis.
\end{itemize}

\paragraph{Step 4: $\alpha=1$.}
Now $z=x^{2^a}$, $x$ has order $2^{a+1}$, and $y^{-1}xy=xz=x^e$ with $e=1+2^a$. Put $n=a+2$; then $e=1+2^{n-2}$, $|G|=2^n$.
\begin{itemize}
\item $a=1$ ($n=3$): $x^4=1$, $y^{-1}xy=x^3=x^{-1}$. If $\beta=0$ this
  is $D_8$, excluded. If $\beta=1$ then $y^2=x^2$ and $G=Q_8$.
\item $a\ge2$ ($n\ge4$): if $\beta=0$ then $x,y$ already satisfy the
  defining relations of $M_{2^n}$. If $\beta=1$ ($y^2=x^{2^a}$), put
  $y'=yx^{2^{a-1}}$. Since $x^ky=yx^{ek}$,
$$y'^2=y^2\,x^{(e+1)2^{a-1}}=x^{2^a}\,x^{2^a+2^{2a-1}}=x^{2^{a+1}+2^{2a-1}}=1,$$
  as $x^{2^{a+1}}=1$ and $2a-1\ge a+1$ for $a\ge2$; and
  $y'^{-1}xy'=x^e$. So again $x,y'$ satisfy the relations of
  $M_{2^n}$.
\end{itemize}
In the last case $G$ is a quotient of $M_{2^n}$ (generated by $x,y'$
satisfying its relations); since $|M_{2^n}| = 2^n = |G|$ (a split
extension of $\la  x \ra  \cong C_{2^{n-1}}$ by $C_2$, as
$e^2 \equiv 1 \bmod 2^{n-1}$), $G \cong M_{2^n}$.

\section{Proof of the Main Theorem}

Let $G$ be a finite group and $H \trianglelefteq G$ a normal subgroup
with $C = G/H = \la \tau\ra $ cyclic of order $p \ge 2$.
For a $p$-group $G$ and a subgroup $H$ of index $p$, this is automatic
($H$ is normal, and a group of prime order is cyclic).

Fix $t \in G$ and set $\tau = tH$. Since $G/H$ is abelian we have
$G'\subseteq H$, and since $G/H=\la tH\ra $ we have
$G = \la H, t \ra = \bigcup_{j=0}^{p-1}t^jH$. Put $A = H/H'$ (written
additively unless stated otherwise).  Since $A$ is abelian, $H$ acts
trivially on $A$ by conjugation, so the conjugation action of $G$ on
$A$ factors through $C$; this makes $A$ a $\Z[C]$-module, with
$\tau\bar h = \overline{t^{-1}ht}$ (independent of the choice of $t$).
We write $A^\tau = \ker(\tau-1)$ for the group of $\tau$-fixed points
and $\mathrm N=1+\tau+\dots+\tau^{p-1}\in\Z[C]$ for the norm element.
Let $\Ver \colon G/G'\to A$ be the transfer and $\M(G) = H_2(G,\Z)$
the Schur multiplier of $G$; put $\kappa = \ker\Ver$ and
$Q = \coker\bigl(\M(H)\to\M(G)\bigr)$.

\begin{thm}\label{thmp}
  Let $G$ be a finite group and $H$ a normal subgroup such that $G/H$
  is cyclic of order $p$. Then there is an exact sequence
  $$ 0 \lra \kappa \lra G/G' \xrightarrow{\ \Ver\ } A^\tau \lra Q \lra 0$$
  with $Q \cong A^\tau/\Ver(G/G')$, and
  $$ |\ker \Ver| = p \cdot |\coker(\M(H)\to \M(G))|. $$
  In particular, for a $p$-group $G$ and $[G:H]=p$,
  $$ |\ker \Ver| = p\,|\coker(\M(H)\to \M(G))| \le p\,|\M(G)|,$$
  with equality if and only if $\M(H)\to \M(G)$ is the zero map.
\end{thm}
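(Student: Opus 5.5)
The plan is to split the statement into a counting identity, which is easy, and the exactness claim $Q\cong A^\tau/\Ver(G/G')$, which is the real content.

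\textbf{Counting.} First, the transfer lands in $A^\tau$. Conjugating the transversal $\{t^j\}$ by $t$ only permutes the cosets, so $\tau\Ver(g)=\Ver(g)$. Next, apply the five-term homology sequence of $1\to H\to G\to C\to 1$:
$$\M(G)\lra \M(C)\lra A_C\lra G/G'\lra C\lra 0 .$$
By (F2) we have $\M(C)=0$, so $|G/G'|=p\,|A_C|$. Since $A$ is finite, $\tau-1$ is an endomorphism of $A$ with kernel $A^\tau$ and cokernel $A_C$, and these have the same order. Hence $|G/G'|=p\,|A^\tau|$. Once $Q\cong A^\tau/\Ver(G/G')$ is known, we get
$$|\kappa|=\frac{|G/G'|}{|\Ver(G/G')|}=\frac{p\,|A^\tau|}{|\Ver(G/G')|}=p\,|Q|.$$
For a $p$-group, $|Q|\le|\M(G)|$, with equality iff the image of $\M(H)\to\M(G)$ is trivial. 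So everything reduces to the exact sequence.

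\textbf{Exactness via Hopf's formula (F1).} Write $G=F/R$ with $F$ free of rank $d$, and let $E\supseteq R$ be the preimage of $H$. Then $E$ is free of index $p$ in $F$, and $H=E/R$. By (F1),
$$\M(H)=(R\cap E')/[E,R],\qquad \M(G)=(R\cap F')/[F,R],$$
with the map induced by inclusion, so $Q=(R\cap F')/\bigl((R\cap E')[F,R]\bigr)$. On the free side, the transfer $V_F\colon F/F'\to E/E'$ should be understood completely. By Schreier, $E/E'$ is the permutation-type $\Z[C]$-lattice $\Z[C]^{d-1}\oplus\Z$. The free transfer is injective, and I expect its image to be exactly $(E/E')^\tau$. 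This can be checked on a basis $t,x_2,\dots,x_d$ of $F$ with $E$ generated by $t^p$ and the conjugates $t^{-j}x_it^j$: then $V_F(t)=t^p$ and $V_F(x_i)=\mathrm N(x_i)$.

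\textbf{Diagram chase.} Next I compare the rows
$$0\to R/(R\cap F')\to F/F'\to G/G'\to 0\quad\text{and}\quad 0\to R/(R\cap E')\to E/E'\to A\to 0,$$
linked by the free transfer on the middle term and by $\Ver$ on the right. The left vertical map is the transfer restricted to $R$. For $r\in R\subseteq E$ it is $\prod_j t^{-j}rt^j$ modulo $E'$. Modulo $[F,R]$ this is just $r^p$ times commutators lying in $[F,R]$. Taking $\tau$-invariants of the second row gives
$$0\to \bigl(R/(R\cap E')\bigr)^\tau\to (E/E')^\tau\to A^\tau\to H^1\bigl(C,R/(R\cap E')\bigr)\to\cdots.$$
Since $(E/E')^\tau$ is the image of $F/F'$, an element of $A^\tau$ lies in $\Ver(G/G')$ iff it lifts to $(E/E')^\tau$. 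So $A^\tau/\Ver(G/G')$ injects into $H^1(C,R/(R\cap E'))=\ker\mathrm N/(\tau-1)$. The remaining task is to identify this image with $(R\cap F')/\bigl((R\cap E')[F,R]\bigr)$. The map should be: take $r\in R\cap F'$, use injectivity and the description of $V_F$ to write $r$ modulo $E'$ as $(\tau-1)$ of an element of $E/E'$, and project that element to $A^\tau$. Conversely, an element $a\in A^\tau$ lifts to some $e\in E$ with $(\tau-1)e\in R$.

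\textbf{Main obstacle.} I expect the hard step to be verifying that this assignment is well defined, with kernel exactly $(R\cap E')[F,R]$ and image exactly the classes not coming from $\Ver$. The inclusion $[F,R]\subseteq (\tau-1)$-image modulo $E'$ is clear, since $[t,r]\equiv(\tau-1)r$. The delicate point is that $\ker\mathrm N$ on $R/(R\cap E')$ corresponds precisely to $R\cap F'$. I would first try to prove this using injectivity of the free transfer: $\mathrm N r\equiv V_F(r)$, and $V_F(r)=0$ iff $r\in F'$. If this bookkeeping becomes unwieldy, the fallback is to invoke the Tate-cohomological form of the Lyndon–Hochschild–Serre sequence for cyclic $C$, where $\Ver$ is corestriction on $\hat H^{-2}$.
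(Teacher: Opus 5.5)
Most of your reduction is correct. The count $|G/G'|=p\,|A^\tau|$ from the five-term sequence with $\M(C)=0$ is right. So is the Schreier description $E/E'\cong\Z\oplus\Z[C]^{d-1}$ (take $F$ free on $t,x_2,\dots,x_d$ with the $x_i$ mapping to generators of $H$), and so is the computation $V_F(F/F')=(E/E')^\tau$ with $V_F$ injective. Your observation that $\mathrm N\bar r=V_F(r)$ for $r\in R$ is also right: for $M:=RE'/E'\cong R/(R\cap E')$ it gives $\ker\mathrm N_M=(R\cap F')/(R\cap E')$.

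However, the isomorphism $Q\cong A^\tau/\Ver(G/G')$ is the entire content of the theorem. Your proposal stops at an injection $A^\tau/\Ver(G/G')\hookrightarrow H^1(C,M)$ and defers the identification of its image with $Q$ as the ``main obstacle'', with a fallback. As written this is a genuine gap: you show neither that the injection is onto nor what $H^1(C,M)$ is.

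The missing idea is that the middle term of your second row is a permutation lattice. Hence $H^1(C,E/E')=H^1(C,\Z)\oplus H^1(C,\Z[C])^{d-1}=\operatorname{Hom}(C,\Z)\oplus 0=0$, so the connecting map $A^\tau\to H^1(C,M)$ is onto. Since the image of $(E/E')^\tau=V_F(F/F')$ in $A$ is $\Ver(G/G')$, you get $A^\tau/\Ver(G/G')\cong H^1(C,M)=\ker\mathrm N_M/(\tau-1)M$. You already have the numerator. For the denominator:
\begin{itemize}
\item $(\tau-1)\bar r$ is the class of $t^{-1}rtr^{-1}\in[F,R]$;
\item conversely, write a generator $[f,r]$ of $[F,R]$ with $f=et^j$, $e\in E$. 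Then $[et^j,r]=[e,r]^{t^j}[t^j,r]$, where $[e,r]\in E'$ and $[t^j,r]\equiv(\tau^j-1)\overline{r^{-1}}\in(\tau-1)M$ modulo $E'$.
\end{itemize}
So the preimage of $(\tau-1)M$ in $R$ is $(R\cap E')[F,R]$, and $H^1(C,M)\cong(R\cap F')/\bigl((R\cap E')[F,R]\bigr)=Q$. No separate well-definedness check is needed: your explicit map $R\cap F'\to A^\tau/\Ver(G/G')$ induces the inverse of this connecting isomorphism.

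Once this is supplied, your proof is a genuinely different route from the paper's. The paper never chooses a presentation. It applies $H_*(G,-)$ to $0\to I\to\Z[C]\to\Z\to0$ and $0\to\Z\to\Z[C]\to I\to0$, uses Shapiro's lemma, and finishes with a snake-lemma chase. Its count of $|G/G'|$ is done by hand, showing that $G'$ is the preimage of $(\tau-1)A$. Your version needs only Hopf's formula, Schreier's theorem and cyclic cohomology of a lattice. In exchange it yields the explicit map $A^\tau\to Q$: $a\mapsto$ the image in $Q$ of $(\tau-1)\bar e$, for any lift $e\in E$ of $a$.

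One correction to your fallback: under $\hat H^{-2}=H_1$, the transfer $G/G'\to H/H'$ is \emph{restriction}, not corestriction. Corestriction corresponds to the map induced by $H\hookrightarrow G$.
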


\begin{cor}\label{cor:main}
  In the situation of Theorem~\ref{thmp}:
  \begin{enumerate}
  \item[(a)] $p$ divides $|\ker\Ver|$; in particular $|\ker\Ver|\ge p$;
  \item[(b)] $|\ker\Ver| \le p\,|\M(G)|$;
  \item[(c)] $|\ker\Ver| = p$ if and only if $\M(H)\to\M(G)$ is onto;
    thus $|\ker\Ver| = p$ whenever $\M(G) = 1$.
  \end{enumerate}
\end{cor}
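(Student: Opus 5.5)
The plan is to read all three parts directly off the order formula in Theorem~\ref{thmp}, namely
$$ |\ker\Ver| = p\cdot|Q|, \qquad Q=\coker\bigl(\M(H)\to\M(G)\bigr). $$
This formula is stated there for an arbitrary finite group $G$ with $H\trianglelefteq G$ and $G/H$ cyclic of order $p$, which is exactly the situation of the corollary. Hence no further group theory is needed, only elementary facts about finite abelian groups. All groups involved are finite: $\M(G)$ and $\M(H)$ are finite by (F0), so $Q$ is a finite abelian group and $|Q|$ is a positive integer.

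For (a), since $|Q|$ is a positive integer, the formula shows that $p$ divides $|\ker\Ver|$. Moreover $|Q|\ge 1$ gives $|\ker\Ver|\ge p$.

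For (b), $Q$ is by definition the quotient $\M(G)/\im\bigl(\M(H)\to\M(G)\bigr)$. Therefore $|Q|$ divides $|\M(G)|$, and in particular $|Q|\le|\M(G)|$. Multiplying by $p$ gives $|\ker\Ver|\le p\,|\M(G)|$.

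For (c), the formula shows that $|\ker\Ver|=p$ holds if and only if $|Q|=1$. This is the case if and only if the cokernel vanishes, that is, if and only if $\M(H)\to\M(G)$ is onto. If $\M(G)=1$, every map into $\M(G)$ is trivially onto, so $|\ker\Ver|=p$. Alternatively, this last statement follows by combining (a) and (b): together they give $p\le|\ker\Ver|\le p\cdot 1$.

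There is no real obstacle here; all the work lies in Theorem~\ref{thmp}. The only point to watch is that the corollary is applied in the generality of that theorem (any finite $G$ with $G/H$ cyclic of order $p$), not just for $p$-groups. For the order formula this is fine, since Theorem~\ref{thmp} asserts it in that generality.
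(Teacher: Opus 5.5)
Your proposal is correct and is the paper's argument: you read all three parts off the identity $|\ker\Ver| = p\,|Q|$ of Theorem~\ref{thmp}. Like the paper, you use only that $Q$ is a quotient of $\M(G)$ and that $|Q|=1$ exactly when $\M(H)\to\M(G)$ is onto.
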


\begin{proof}
  All three statements follow from $|\ker\Ver|=p\,|Q|$, since $Q$ is a
  quotient of $\M(G)$ and $|Q|=1$ iff $\M(H)\to\M(G)$ is onto.
\end{proof}

\begin{prop}
  Let $K/k$ be a cyclic unramified extension of number fields of
  degree $p$ (unramified at all places, including the infinite ones),
  let $K^1$ be the Hilbert class field of $K$, $G=\Gal(K^1/k)$ and
  $H=\Gal(K^1/K)$. Then $|\kappa_{K/k}|=|\ker\Ver|$ (Section~1), and
  by Iwasawa's formula \cite{Iwasawa}
  $|\kappa_{K/k}|=p\,(E_k:NE_K)$. Hence
  $$ (E_k:NE_K)=|\coker(\M(H)\to\M(G))|. $$
\end{prop}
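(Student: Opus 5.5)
The plan is to combine three ingredients: the class field theoretic translation from Section~1, Theorem~\ref{thmp} applied to $H \trianglelefteq G$, and Iwasawa's formula as a black box. Then we compare the two expressions for $|\kappa_{K/k}|$.

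\textbf{Step 1: the group-theoretic setup.} Since $K/k$ is Galois and $K^1$ is canonically attached to $K$, every embedding of $K^1$ over $k$ maps $K^1$ to itself. Hence $K^1/k$ is Galois and $H=\Gal(K^1/K)$ is normal in $G$, with $G/H\cong\Gal(K/k)$ cyclic of order $p$.

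Next we identify $G/G'$. The maximal abelian subextension of $K^1/k$ is the fixed field of $G'$.
\begin{itemize}
\item It is unramified over $k$: $K^1/K$ is unramified, and $K/k$ is unramified at all places, including the infinite ones. So it is contained in $k^1$.
\item Conversely $k^1\subseteq K^1$, because $k^1K/K$ is abelian and unramified.
\end{itemize}
Thus $G/G'\cong\Gal(k^1/k)\cong\Cl(k)$. Here the hypothesis that $K/k$ is unramified at the infinite places is exactly what guarantees $K\subseteq k^1$ in the wide, rather than narrow, sense. Likewise $H$ is abelian, since $K^1$ is the maximal abelian unramified extension of $K$. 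So $H/H'=H\cong\Cl(K)$.

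\textbf{Step 2: the capitulation kernel.} By Artin's theorem, the transfer $\Ver\colon G/G'\to H$ corresponds under the Artin isomorphisms to the extension map $\iota_{k\to K}$; this is the commutative square of Section~1. Taking kernels gives $|\kappa_{K/k}|=|\ker\Ver|$.

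\textbf{Step 3: apply Theorem~\ref{thmp}.} Since $G/H$ is cyclic of order $p$, Theorem~\ref{thmp} yields
$$|\ker\Ver| = p\,|\coker(\M(H)\to\M(G))|.$$

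\textbf{Step 4: Iwasawa's formula.} I would cite Iwasawa's formula for a cyclic extension unramified at all places:
$$|\kappa_{K/k}| = (K:k)\,(E_k:N_{K/k}E_K).$$
For orientation, the argument behind it runs as follows.
\begin{itemize}
\item Because $K/k$ is unramified, there are no ambiguous ideals beyond those extended from $k$. Hence $\kappa_{K/k}\cong H^1(\Gal(K/k),E_K)$.
\item The Herbrand quotient of the unit group in an extension unramified at infinity equals $1/(K:k)$.
\end{itemize}
Together these give $|H^1| = p\,|\hat H^0| = p\,(E_k:NE_K)$. Equating this with Steps~2 and~3 and cancelling $p$ gives
$$(E_k:NE_K)=|\coker(\M(H)\to\M(G))|.$$

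The only delicate points are the field-theoretic identifications in Step~1, especially the role of the infinite places, and the compatibility of transfer with ideal extension, which is Artin's classical theorem. The deduction itself is a one-line comparison.
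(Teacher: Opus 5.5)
Your proposal is correct and follows the same argument as the paper. Like the paper, you identify $|\kappa_{K/k}|=|\ker\Ver|$ via Artin's transfer square, apply Theorem~\ref{thmp} to get $|\ker\Ver|=p\,|\coker(\M(H)\to\M(G))|$, and compare with Iwasawa's formula $|\kappa_{K/k}|=p\,(E_k:NE_K)$ to cancel $p$. You also spell out two things the paper leaves implicit, and both are accurate: the verification that $K\subseteq k^1\subseteq K^1$, and the sketch of Iwasawa's formula via $\kappa_{K/k}\cong H^1(\Gal(K/k),E_K)$ together with the Herbrand quotient $1/(K:k)$.
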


\subsection*{Standard facts used}
We use the following standard results on the homology groups $H_n(G,B)$
\begin{itemize}
\item[(H1)] A short exact sequence of $G$-modules gives a long exact
  homology sequence;
\item[(H2)] \emph{Shapiro's lemma}: let $R=\Z[G/H]$ with $G$ acting by
  left translation.  Regard $\Z$ as the trivial
    \emph{$H$-module}; then $\Ind_H^G\Z=\Z[G]\otimes_{\Z[H]}\Z\cong R$
    (induced and coinduced modules agree because $(G:H)<\infty$), and
  Shapiro's lemma \cite[III.(6.2)]{Brown} gives natural isomorphisms
  $H_n(G,R)\cong H_n(H,\Z)$ for all $n$.  Under these isomorphisms,
  \begin{itemize}
  \item the augmentation $\eps\colon \Z[G/H] \to \Z$ induces the
    map $i_*\colon H_n(H,\Z)\to H_n(G,\Z)$ induced by the inclusion
    $H\subseteq G$; \cite[III.6]{Brown}
    \item the map $\nu\colon\Z\to\Z[G/H]$, $1\mapsto\sum_{gH\in
      G/H}gH$, induces the \emph{transfer} $H_n(G,\Z)\to H_n(H,\Z)$;
      \cite[III.9]{Brown}
    \item the $G$-endomorphism of $\Z[C]=\Z[G/H]$ given by
      multiplication by $\tau$ induces on $H_n(H,\Z)$ the map $c_{t*}$
      induced by conjugation by $t$; \cite[III.8]{Brown}.
  \end{itemize}
\item[(H3)] $H_0(G,\Z)=\Z$; $H_1(H,\Z)=H/H'$; $H_2(G,\Z)=\M(G)$ the
  Schur multiplier; on $H_1$ the transfer is the group-theoretic transfer
  $\Ver$; on $H_0$ the transfer $\Z\to\Z$ is multiplication by the index
  $(G:H)$ (\cite[III.9]{Brown}).
\end{itemize}

\subsection*{Two exact sequences}

Let $R=\Z[C]$ with $C = G/H$, regarded as a $G$-module via $G \to C$,
and let $\eps\colon R\to\Z$, $\sum a_j\tau^j\mapsto\sum a_j$ be the
augmentation with kernel $I$.  Then $I=(\tau-1)R$: the inclusion
  ``$\supseteq$'' is clear, and if $\sum a_j=0$ then $\sum
  a_j\tau^j=\sum a_j(\tau^j-1)=(\tau-1)\sum_j
  a_j(1+\tau+\dots+\tau^{j-1})$.  Because $C$ is cyclic, the
annihilator of $\tau-1$ in $R$ is $\Z\mathrm N$: indeed $(\tau-1)\sum
a_j\tau^j=\sum(a_{j-1}-a_j)\tau^j$ (indices mod $p$) vanishes iff all
$a_j$ are equal. Hence $x \mapsto (\tau-1)x$ gives a surjection $R \to
I$ with kernel $\Z \mathrm N$, and $\nu(1) = \mathrm N$. We get
\begin{align*}
  & \text{(I)}\quad 0\to I\xrightarrow{\ j\ }R
     \xrightarrow{\ \varepsilon\ }\Z\to0,\\
  & \text{(II)}\quad 0\to\Z\xrightarrow{\ \nu\ }R
     \xrightarrow{\ \pi\ }I\to0,\qquad \pi(x)=(\tau-1)x,
\end{align*}
and $j\pi$ is multiplication by $\tau-1$ on $R$.
All maps are $G$-equivariant: $j$ is an inclusion of submodules; $\eps(\tau\lambda)=\eps(\lambda)$
(so $G$ acts trivially on $\Z$ in (I)); $\tau\mathrm N=\mathrm N$ (so $\nu$ is $G$-linear);
and $\pi(\tau\lambda)=\tau(\tau-1)\lambda=\tau\pi(\lambda)$ because $\Z[C]$ is commutative.

\subsection*{Sequence (I)}
The long exact sequence attached to (I) contains
$$ \begin{CD}
  H_2(G,R) @>{\eps_*}>> H_2(G,\Z) @>{\partial}>> H_1(G,I) @>{j_*}>> H_1(G,R).
   \end{CD} $$
By (H2) and (H3) this reads
$$ \M(H) \xrightarrow{\ i_*\ } \M(G) \xrightarrow{\ \partial\ }
   H_1(G,I) \xrightarrow{\ j_*\ } H/H'. $$
Exactness at $\M(G)$ and at $H_1(G,I)$ shows that $\partial$ induces
an isomorphism
\begin{equation}\label{eqI}
  Q = \coker \bigl(\M(H)\to \M(G)\bigr)\ \xrightarrow{\ \sim\ }\
      \ker   \bigl(j_*\colon H_1(G,I) \to H/H'\bigr).
\end{equation}

\subsection*{Sequence (II)}
The relevant part of the long exact homology sequence of (II) is
$$ \begin{CD}
  G/G' @>{\Ver}>> H/H' @>{\pi_*}>> H_1(G,I) @>{\partial'}>> H_0(G,\Z) @>{\nu_*}>> H_0(G,R),
  \end{CD} $$
where we used $H_1(G,\Z)=G/G'$, $H_1(G,R)=H_1(H,\Z)=H/H'$ and $\nu_*=\Ver$ on $H_1$.
Under $H_0(G,\Z)=\Z$ and $H_0(G,R)\cong H_0(H,\Z)=\Z$ (the latter induced by $\eps$),
the map $\nu_*$ is multiplication by $p$, since $\nu(1)=\mathrm N\mapsto p$. Thus
$$ \begin{CD}
  G/G' @>{\Ver}>> H/H' @>{\pi_*}>> H_1(G,I) @>{\partial'}>> \Z @>{\cdot p}>> \Z.
  \end{CD} $$
Exactness at the fourth term gives $\im \partial' = 0$ since
multiplication by $p$ is injective on $\Z$, so $\partial' = 0$,
$\pi_*$ is onto, and exactness at $H/H'$ gives
\begin{equation}\label{eqII}
  \ker\pi_* = \Ver(G/G'), \qquad H_1(G,I) \simeq (H/H') / \Ver(G/G').
\end{equation}

\subsection*{Combining the two sequences}

Let us collect what we have so far.  Since $j \pi:R \lra I \lra R$
is multiplication by $\tau-1$ on $R$, the induced map on
$H_1(G,R) \simeq A = H/H'$ is the endomorphism
$j_*\pi_*: a \mapsto a^{\tau-1}$. Setting
$A[\tau] = \ker (\tau-1)$, the exact sequences extracted from
the long exact homology sequences fit into the following exact and
commutative diagram:
$$ \begin{CD}
  @.        @.        @.              @.               0 @. \\
  @.        @.        @.              @.           @VVV   @. \\
  @.        @.        @.              @.              Q @. \\
  @.        @.        @.              @.            @VVV @. \\
  0 @>>> \kappa @>>> G/G' @>{\Ver}>> H/H' @>{\pi_*}>> H_1(G,I) @>>> 0 \\
  @.       @.         @VVV   @V{\tau-1}VV   @V{j_*}VV @. \\
  @.       @.          0  @>>>  H/H' @>{\id}>> H/H' @>>> 0
\end{CD} $$
The commutativity of the left square follows from
$\pi_*\Ver=0$ and $j_*\pi_*=\tau-1$.

Applying the snake lemma then gives us an exact sequence
$$ \begin{CD}
  0 @>>> \kappa  @>>> G/G' @>{\Ver}>> A[\tau] @>>> Q @>>> 0
   \end{CD} $$
(which is easily proved by a simple diagram chase in this case),
and in particular we have
\begin{equation}\label{eqQ}
  Q \simeq  A[\tau]/\Ver(G/G').
  \end{equation}

\begin{prop}\label{countp}
  Let $L$ be the preimage of $(\tau-1)A$ under the natural projection
  $H\to A$. Then $G' = L$, and we have $|G/G'| = p\,|A^\tau|$.
\end{prop}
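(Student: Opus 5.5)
We prove $G' = L$ by showing both inclusions, and then count orders using the exact sequence $0 \to A^\tau \to A \xrightarrow{\tau-1} A \to A/(\tau-1)A \to 0$. Recall that $G = \bigcup_j t^jH$, $G'\subseteq H$, and $\tau\bar h = \overline{t^{-1}ht}$.

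\emph{$L \subseteq G'$.} First, $H' \subseteq G'$. An element of $(\tau-1)A$ has the form $\overline{t^{-1}ht\,h^{-1}}$. Now $t^{-1}htt h^{-1}$ is not what we want; rather $t^{-1}ht h^{-1} = t^{-1}h t h^{-1}$, and up to the obvious rearrangement this is the commutator $[t,h^{-1}]$-type element $t^{-1}(hth^{-1})$. It lies in $G'$, since $t^{-1}\cdot hth^{-1}$ is a product of $t^{-1}$ and a conjugate of $t$. Hence the preimage $L$ of $(\tau-1)A$ is $H'$ times such elements, so $L \subseteq G'$.

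\emph{$G' \subseteq L$.} First note that $L$ is normal in $G$. It is normal in $H$ because it contains $H'$, and it is $t$-stable because $(\tau-1)A$ is a $\Z[C]$-submodule of $A$. Next, $G/L$ is abelian. Since $G/H$ is cyclic generated by $tH$, it suffices to check three things: $H/L$ is abelian (because $H'\subseteq L$); $t$ commutes with $H$ modulo $L$, which is exactly the condition $t^{-1}ht\equiv h \pmod L$, i.e.\ that $\tau$ acts trivially on $A/(\tau-1)A$; and $t$ commutes with its own powers. Together these show $G/L$ is generated by the commuting images of $H$ and $t$, hence is abelian. Therefore $G' \subseteq L$.

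\emph{Counting.} From $G'=L$ we get $H/G' \cong A/(\tau-1)A$, so
$$|G/G'| = p\,|H/G'| = p\,|A/(\tau-1)A|.$$
Since $A$ is finite, the exact sequence above gives $|A/(\tau-1)A| = |\ker(\tau-1)| = |A^\tau|$. Hence $|G/G'| = p\,|A^\tau|$.

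The only delicate point is the verification in the second inclusion that $t$ commutes with $H$ modulo $L$. Once that is seen to be a restatement of $L$ containing all the elements $t^{-1}htt^{-1}$-type $\overline{t^{-1}ht}-\bar h$, i.e.\ all of $(\tau-1)A$, the rest is routine.

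This proposition then combines with Theorem~\ref{thmp}'s exact sequence to give $|\kappa| = |G/G'|\,|Q|/|A^\tau| = p\,|Q|$.
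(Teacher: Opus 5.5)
Your proof is correct and follows the paper's argument step for step. Both inclusions rest on the identity $(\tau-1)\bar h=\overline{t^{-1}hth^{-1}}=\overline{[t,h^{-1}]}$, normality of $L$ comes from $(\tau-1)A$ being a $\Z[C]$-submodule, $G/L$ is abelian because it is generated by the pairwise commuting images of $H$ and $t$, and the count uses that kernel and cokernel of $\tau-1$ on the finite group $A$ have the same order. The self-correcting fragments about ``$t^{-1}htth^{-1}$'' in your first inclusion and closing paragraph are garbled and should be deleted; the clean statement $t^{-1}hth^{-1}=[t,h^{-1}]\in G'$ is all that is needed.
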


\begin{proof}
  $L\subseteq G'$: For $h\in H$ we have, writing $A$ multiplicatively,
  $(\tau-1)\ov{h}=t^{-1}hth^{-1}H'=[t,h^{-1}]H'$. Since $A$ is generated by
  the classes $\ov h$ and $\tau-1$ is a homomorphism, $(\tau-1)A$ is
  generated by these elements; hence $L$ is generated by $H'$ and the
  commutators $[t,h^{-1}]$, $h\in H$. All of these lie in $G'$, so
  $L \subseteq G'$.

  $L$ is normal in $G$: The subgroup $(\tau-1)A$ is a
  $\Z[C]$-submodule of $A$, because $\tau$ commutes with $\tau-1$.
  The conjugation action of $G$ on $A$ factors through $C$, hence
  $(\tau-1)A$ is stable under all of $G$.  So for $g\in G$ and $x\in
  L$ we have $g^{-1}xg\in H$ (as $H\trianglelefteq G$) and its class
  in $A$ lies in $(\tau-1)A$, i.e.\ $g^{-1}xg \in L$.

  $G' \subseteq L$: The image of $H$ in $G/L$ is abelian since
  $H' \subseteq L$.  For $h\in H$ we have $[t,h^{-1}]\in L$,
  i.e.\ $t^{-1}ht=h$ in $G/L$, so $t$ commutes with the image of every
  element of $H$. Since $G=\la H,t\ra $, the group $G/L$ is generated by
  the images of $H$ and $t$, which commute pairwise; hence $G/L$ is
  abelian and $G'\subseteq L$.

  Counting: We have shown that $G' = L$, so $(H:G')=|A/(\tau-1)A|=|A^\tau|$,
  because kernel and cokernel of an endomorphism of a finite group
  have the same order. Since $G'\subseteq H$ we get
  $|G:G'|=|G:H|\,|H:G'|=p\,|A^\tau|$.
\end{proof}

\subsection*{Proof of Theorem~\ref{thmp}}

The exact sequence $0 \to \kappa \to G/G' \to A^\tau \to Q\to0$ was
established in \eqref{eqQ}. Hence $|G/G'| = |\kappa|\,|\Ver(G/G')|$
and $|Q| = |A^\tau|/|\Ver(G/G')|$. By Proposition~\ref{countp},
$$|\ker \Ver| = \frac{|G/G'|}{|\Ver(G/G')|} = \frac{p\,|A^\tau|}{|\Ver(G/G')|}
    = p \,|Q|=p\,|\coker(\M(H)\to \M(G))|.$$
The cokernel is a quotient of $\M(G)$, giving the inequality, with
equality iff the cokernel is all of $\M(G)$, i.e.\ iff $\M(H)\to \M(G)$
is zero. \qed

\section*{Remarks}

In November 2025 I looked at Folk's article \cite{Folk}.  He said that
there is extra capitulation in a cyclic unramified extension $K/k$ if
the order of $\kappa_{K/k}$ is strictly larger than $(K:k)$. He showed
that extra capitulation implies the existence of certain central
extensions, which in turn, so I hoped, would show that the Schur
multiplier of the Galois group of $K^1/k$ cannot be trivial.
Before I found the time to think this through, the AI revolution happened.

Now that the transfer conjecture is proved we might ask conversely how
these field extensions corresponding to the cokernel of 
$\M(H) \lra \M(G)$ may be constructed.

\section{Acknowledgement}
The proof was found with the help of the free version of Claude in
October 2026. Over several sessions, Claude suggested three proofs:
the first used spectral sequences, the second replaced everything
by explicit calcuelcations based on Hopf's formula; I chose to present
the third version using standard homology groups in my own exposition.

I would like to dedicate this article to the memory of my dear friend
Chip Snyder (1946--2026), who kept returning to these questions during
the final years of his life. In particular, together with Elliot
Benjamin (see \cite{BS}) he proved that the transfer conjecture holds
in the following case: If $G$ is a finite $2$-group and $H$ is a maximal
abelian subgroup of $G$, then 
$ \rk \ker \Ver(G/G' \lra H/H') \le 1 + \rk \M(G). $

\end{document}